\setlist[enumerate]{
  label=(\thethm.\arabic*),
  before={\setcounter{enumi}{\value{equation}}},
  after={\setcounter{equation}{\value{enumi}}},
  itemsep=1ex
}
\setlist[itemize]{
  leftmargin=*,
  itemsep=1ex,
  label=$\circ$
}
\newtheorem*{thm-plain}{Theorem}
\newtheorem{thm}{Theorem}[section]
\newtheorem{lem}[thm]{Lemma}
\newtheorem{prp}[thm]{Proposition}
\newtheorem{cor}[thm]{Corollary}
\newtheorem{ques}[thm]{Question}
\numberwithin{equation}{thm}
\theoremstyle{definition}
\newtheorem{dfn}[thm]{Definition}
\newtheorem*{dfn-plain}{Definition}
\theoremstyle{remark}
\newtheorem{rem}[thm]{Remark}
\newtheorem{exm}[thm]{Example}
\newtheorem*{rem-plain}{Remark}
\newcommand{\inv}{^{-1}}
\newcommand{\from}{\colon}
\newcommand{\lto}{\longrightarrow}
\newcommand{\isom}{\cong}
\newcommand{\defn}{\coloneqq}
\newcommand{\wt}{\widetilde}
\newcommand{\wb}{\overline}
\renewcommand{\d}{\mathrm d}
\newcommand{\del}[1]{\partial_{#1}}
\newcommand{\ddual}{^{\smash{\scalebox{.7}[1.4]{\rotatebox{90}{\textup\guilsinglleft} \hspace{-.5em} \rotatebox{90}{\textup\guilsinglleft}}}}}
\newcommand{\factor}[2]{\left. \raise 2pt\hbox{$#1$} \right/\hskip -2pt \raise -2pt\hbox{$#2$}}
\newcommand{\set}[1]{\left\{ #1 \right\}}
\def\rd#1.{\lfloor{#1}\rfloor}
\def\rp#1.{\lceil{#1}\rceil}
\def\tw#1.{\langle{#1}\rangle}
\renewcommand{\O}[1]{\mathscr{O}_{#1}}
\newcommand{\Omegap}[2]{\Omega_{#1}^{#2}}
\newcommand{\Omegar}[2]{\Omega_{#1}^{[#2]}}
\newcommand{\Omegal}[3]{\Omega_{#1}^{#2} \big( \!\log #3 \big)}
\newcommand{\T}[1]{\mathscr{T}_{#1}}
\newcommand{\can}[1]{\omega_{#1}}
\newcommand{\Reg}[1]{{#1}_{\mathrm{reg}}}
\newcommand{\Sing}[1]{{#1}_{\mathrm{sg}}}
\def\Hnought#1.#2.{\mathit{\Gamma} \!\left( #1, #2 \right)}
\def\HH#1.#2.#3.{\mathrm{H}^{#1} \!\left( #2, #3 \right)}
\def\euler#1.#2.{\chi \!\left( #1, #2 \right)}
\def\HHbig#1.#2.#3.{\mathrm{H}^{#1} \!\big( #2, #3 \big)}
\def\hh#1.#2.#3.{h^{#1} \!\left( #2, #3 \right)}
\def\RR#1.#2.#3.{R^{#1} #2_* #3}
\def\HHc#1.#2.#3.{\mathrm{H}_{\mathrm{c}}^{#1} \!\left( #2, #3 \right)}
\def\Hh#1.#2.#3.{\mathrm{H}_{#1} \!\left( #2, #3 \right)}
\def\Hom#1.#2.#3.{\mathrm{Hom}_{#1} \!\left( #2, #3 \right)}
\def\sHom#1.#2.{\mathscr{H}\!om \!\left( #1, #2 \right)}
\def\Ext#1.#2.#3.{\mathrm{Ext}^{#1} \!\left( #2, #3 \right)}
\def\sExt#1.#2.#3.{\mathscr{E}\!xt^{#1} \!\left( #2, #3 \right)}
\newcommand{\A}[1]{\mathbb A^{#1}}
\newcommand{\F}[1]{\mathbb F_{#1}}
\newcommand{\kahler}{K{\"{a}}hler\xspace}
\DeclareMathOperator{\Spec}{Spec}
\DeclareMathOperator{\Autn}{Aut^\circ}
\DeclareMathOperator{\Exc}{Exc}
\newcommand{\germ}[2]{\left( #1 \in #2 \right)} %{\left( #2, #1 \right)}
\newcommand{\et}{Regular Extension Theorem\xspace}
\newcommand{\lext}{Logarithmic Extension Theorem\xspace}
\newcommand{\lz}{Lipman--Zariski conjecture\xspace}
\newcommand{\lc}{log canonical\xspace}
\renewcommand{\theta}{\vartheta}
\renewcommand{\phi}{\varphi}
\newcommand{\C}{\ensuremath{\mathbb C}}
\renewcommand{\P}{\ensuremath{\mathbb P}}
\renewcommand{\frm}{\mathfrak m}
 \newcommand{\sB}{\mathscr B}
 \newcommand{\cQ}{\mathcal Q}
\definecolor{forrest}{RGB}{81,133,49}
\definecolor{mydarkblue}{RGB}{10,92,153}
\title[The Violation of the Lipman--Zariski conjecture]{The Violation of the Lipman--Zariski conjecture in positive characteristic}
\author{Patrick Graf}
\address{Lehrstuhl f\"ur Mathematik I, Universit\"at Bayreuth, 95440 Bayreuth, Germany}
\email{\href{mailto:patrick.graf@uni-bayreuth.de}{patrick.graf@uni-bayreuth.de}}
\urladdr{\href{http://www.pgraf.uni-bayreuth.de/en/}{www.graficland.uni-bayreuth.de}}
\date{May 6, 2022}
\keywords{Lipman--Zariski conjecture, positive characteristic, rational double points, log canonical surface singularities}
\subjclass[2010]{13N15, 14J17}
\begin{document}

\begin{abstract}
We study the failure of the Lipman--Zariski conjecture in positive characteristic.
For rational double points, the conjecture holds true except for a short finite list of exceptions.
For log canonical surface singularities, the conjecture continues to hold with the same list of exceptions under an additional tameness hypothesis.
In particular, among rational double points in characteristic $p \ge 7$ Lipman's counterexample is the only one, and the conjecture holds for all tame $F$-pure normal surface singularities.
\end{abstract}

\maketitle

% \begingroup
% \hypersetup{linkcolor=black}
% \tableofcontents
% \endgroup

\section{Introduction}

The LZ (= Lipman--Zariski) conjecture asserts that a complex algebraic variety $X$ with locally free tangent sheaf $\T X$ is necessarily smooth~\cite{Lip65}.
Here $\T X = \mathscr{H}\!om_{\O X} \!\!\left( \Omegap X1, \O X \right)$ is the dual of the sheaf of \kahler differentials.
The case where~$X$ is a normal surface is essentially the only open case left, due to Lipman~\cite[Thm.~3]{Lip65}, Becker~\cite[Sec.~8, p.~519]{Becker78}, and Flenner~\cite[Corollary]{Flenner88}.
On the other hand, Lipman already observed that the conjecture fails in positive characteristic.
To be more precise, he gave the following (series of) counterexamples: consider the surface $X \subset \A3_k$ over a perfect field~$k$ of characteristic~$p$ defined by the equation
\[ xy - z^n = 0, \quad \text{where $p$ divides $n$.} \]
Then one can see that $\T X$ is freely generated by the two derivations $\delta_1 = x \del x - y \del y$ and $\delta_2 = \del z$, cf.~\cite[p.~892]{Lip65}.

The purpose of the present work is a more detailed study of the \lz on (normal) surfaces in positive characteristic.
First, in \cref{lz obs} below we give sufficient conditions for it to hold in a given situation.
We take our clue from the case $k = \C$.
In this case, the \lz is closely related to the problem whether $1$-forms on the smooth locus of $X$ extend to a (log) resolution of singularities $\pi \from Y \to X$ (with exceptional locus $E$).
In fact, we have the following chain of implications ($\dim X$ arbitrary, $k = \C$):

\[ \xymatrix{
\text{The sheaf $\pi_* \Omegal Y1E$ is reflexive} \ar@{=>}^{\text{\cite[Thm.~3.1]{GK13}}}[d] \\
\text{The sheaf $\pi_* \Omegap Y1$ is reflexive} \ar@{=>}^{\text{\cite[(1.6)]{SvS85}}}[d] \\
\text{The \lz holds for $X$}
} \]
\vspace{1ex}

If $X$ is a surface over a field of positive characteristic, neither of the above implications holds.
An example where the first implication fails is given in~\cite[Ex.~10.2]{FunnyExt}.
It has the bad behaviour that both $\pi_* \Omegal Y1E$ and $\pi_* \can Y$ are reflexive, but $\pi_* \Omegap Y1$ is not.
However, in~\cite[Thm.~1.3]{FunnyExt} we showed that the first implication remains valid if $X$ is \emph{tame} (cf.~\cref{dfn tame}).

The problem with the second implication is that, unlike in characteristic zero, a derivation $\delta$ on a variety $X$ may not lift to any resolution of~$X$, intuitively because it need not preserve the singular locus~$\Sing X$ -- or the Jacobian ideal of $X$, for that matter.
As an example, consider $\delta = \del z$ on Lipman's example above.
If the failure of this lifting property is ``not too bad'', we say that the MGR (= minimal good resolution) is \emph{almost equivariant}, cf.~\cref{dfn mgr equiv}.
With this notation, our first result is as follows.

\begin{prp}[Sufficient conditions for LZ to hold] \label{lz obs}
Let $\germ0X$ be a normal surface singularity over an algebraically closed field of characteristic $p > 0$, with $\pi \from Y \to X$ the MGR and $E = \Exc(\pi)$.
Assume that
\begin{enumerate}
\item\label{lz obs.1} the sheaf $\pi_* \Omegal Y1E$ is reflexive,
\item\label{lz obs.2} $\germ0X$ is tame, and
\item\label{lz obs.3} $\pi$ is almost equivariant.
\end{enumerate}
Then $\germ0X$ satisfies the \lz.
That is, if $\T X$ is free then $\germ0X$ is smooth.
\end{prp}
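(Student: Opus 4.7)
The plan is to push the two-step chain of implications displayed in the diagram above through to positive characteristic, using hypotheses (2) and (3) to repair the two points where that chain breaks down. Hypothesis (1) asserts reflexivity of $\pi_* \Omegal Y1E$; applying the tame analog of the first implication, namely \cite[Thm.~1.3]{FunnyExt}, under tameness hypothesis (2) then yields reflexivity of $\pi_* \Omegap Y1$ as well. Since $X$ is normal, this is equivalent to the equality $\pi_* \Omegap Y1 = \Omegar X1$, i.e., every K\"ahler $1$-form regular on $\Reg X$ extends to a regular $1$-form on $Y$.

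Assume now that $\T X$ is free, and fix a basis $\delta_1, \delta_2 \in \Hnought X.\T X.$. The dual sheaf $\Omegar X1 \isom \T X\dual$ is then also free, with dual basis $\omega_1, \omega_2 \in \Hnought X.\Omegar X1.$ characterized by $\omega_i(\delta_j) = \delta_{ij}$. By the extension statement of the previous paragraph, $\omega_1, \omega_2$ lift uniquely to $\wt\omega_1, \wt\omega_2 \in \Hnought Y.\Omegap Y1.$.

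The heart of the proof is to bring hypothesis (3) to bear. Unwinding \cref{dfn mgr equiv}, almost equivariance should furnish lifts $\wt\delta_1, \wt\delta_2$ of the derivations $\delta_1, \delta_2$ as sections of $\T Y$ on an open subset $U \subseteq Y$ whose complement is controlled. The duality pairing extends by continuity to $\wt\omega_i(\wt\delta_j) = \delta_{ij}$ on $U$, which forces $\wt\omega_1, \wt\omega_2$ to be linearly independent at every point of $U$. Consequently the regular section $\wt\omega_1 \wedge \wt\omega_2 \in \Hnought Y.\can Y.$ is nowhere zero on $U$; since its vanishing divisor lies in $Y \setminus U$, which the precise content of almost equivariance should restrict to codimension at least~$2$, the vanishing divisor must be empty. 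Hence $\wt\omega_1 \wedge \wt\omega_2$ is nowhere zero on all of $Y$, and the associated map $\O Y^{\oplus 2} \to \Omegap Y1$, $(a, b) \mapsto a \wt\omega_1 + b \wt\omega_2$, has nowhere-vanishing determinant; thus $\Omegap Y1 \isom \O Y^{\oplus 2}$ is trivial.

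With $\Omegap Y1$ trivial, suppose for contradiction that $E$ has an exceptional component $E_i$. The conormal sequence
\[
0 \lto \O Y(-E_i)|_{E_i} \lto \Omegap Y1|_{E_i} \lto \Omegap{E_i}1 \lto 0
\]
exhibits the conormal line bundle $\O Y(-E_i)|_{E_i}$ as a subsheaf of the trivial rank-$2$ bundle $\O{E_i}^{\oplus 2}$ on the smooth projective curve $E_i$. But $E_i^2 < 0$ gives $\deg \O Y(-E_i)|_{E_i} > 0$, whereas no line bundle of positive degree on a smooth projective curve embeds into the trivial bundle. This contradiction forces $E = \emptyset$, so $\pi$ is an isomorphism and $\germ 0X$ is smooth. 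The principal obstacle is the third paragraph: one must verify that the definition of almost equivariance (\cref{dfn mgr equiv}) indeed controls $Y \setminus U$ tightly enough for the codimension argument to go through, and thereby guarantee nonvanishing of $\wt\omega_1 \wedge \wt\omega_2$ along every component of $E$, including at the nodes of the dual graph where no single pair of lifted derivations is obviously regular.
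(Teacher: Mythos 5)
Your first two paragraphs agree with the paper's proof: tameness plus hypothesis~\labelcref{lz obs.1} upgrades logarithmic extension to regular extension via \cite[Thm.~1.3]{FunnyExt}, and the dual basis of $1$-forms lifts to $Y$. The gap is exactly where you flag it, in the third paragraph, and it cannot be repaired in the form you propose. Almost equivariance (\cref{dfn mgr equiv}) is the condition $\dim_\C \cQ_0 \le 1$ for $\cQ = \coker(\pi_* \T Y \to \T X)$; it bounds the \emph{dimension of the space of derivations that fail to lift}, not the \emph{codimension of the locus} where a given derivation fails to be regular. Every derivation on $X$ lifts over $Y \setminus E$, and the obstruction to extending it across $E$ lives along the whole divisor $E$, which has codimension one. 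If the complement of your open set $U$ really had codimension at least two, then reflexivity of the locally free sheaf $\T Y$ would force both derivations to extend to all of $Y$, i.e.\ $\pi$ would be genuinely equivariant ($\cQ = 0$) --- a strictly stronger hypothesis than the one assumed. So the codimension argument for the nonvanishing of $\wt\omega_1 \wedge \wt\omega_2$ along $E$ has no basis in hypothesis~\labelcref{lz obs.3}.

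The correct use of almost equivariance is different: since the images of $\delta_1, \delta_2$ in the at most one-dimensional space $\cQ_0$ are linearly dependent, some nontrivial combination $\delta_1 + \lambda \delta_2$ lifts to a global section $\wt v_1$ of $\T Y$ (in fact of $\T Y(-\log E)$, by the normal bundle sequence and $E_i^2 < 0$). Pairing with the lifted forms shows that $\wt v_1$ is nowhere vanishing; but it is tangent to each $E_i$ and vanishes at the points of $(E - E_i) \cap E_i$ by \cite[(1.10.2)]{Wahl75}, so $E$ must be a single smooth curve carrying a nowhere-zero vector field, hence an elliptic curve. Only then does \cite[Prop.~2.12]{Wahl75} upgrade $\pi$ to an honestly equivariant resolution, so that the second derivation lifts as well and one reaches a contradiction by evaluating the dual pairing at a point $p \in E$, where both lifted derivations land in the one-dimensional subspace $T_p E \subset T_p Y$. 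These intermediate steps --- the elliptic curve and the appeal to Wahl's equivariance criterion --- are the substance of the proof and are missing from your argument. Your endgame (triviality of $\Omegap Y1$ contradicting $E_i^2 < 0$ via the conormal sequence) is a perfectly good alternative to the paper's pointwise evaluation, but it only becomes available once both derivations have actually been lifted to all of $Y$.
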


\begin{rem}
Of the above conditions,~\labelcref{lz obs.1} is probably the most restrictive, but in~\cite{FunnyExt} we gave sufficient conditions for it to hold.
Determining whether~\labelcref{lz obs.2} holds is easy from the dual graph.
The techniques of~\cite[\S2]{Wahl75} in principle allow to compute equivariance of $\pi$, and that paper also has some sufficient conditions.
Furthermore,~\cite{Hirokado19} has completely determined the almost equivariant rational double points.
\end{rem}

With \cref{lz obs} at hand, we investigate the following (vague) question:

\begin{ques} \label{lz ques}
Is Lipman's counterexample in any sense unique, or does the failure of the \lz rather occur ``generically''?
\end{ques}

Since Lipman's example is an RDP (= rational double point), we first concentrate on this class of singularities.
Here we obtain the following:

\begin{thm}[LZ conjecture for RDPs] \label{lz rdp}
Let $\germ0X$ be a two-dimensional RDP over an algebraically closed field $k$ of characteristic $p > 0$.
Then $\germ0X$ satisfies the \lz, except in the following cases:
\begin{align} \label{lz rdp list} \nonumber
\phantom{\hspace{4em}}
A_n, & \quad n \equiv -1 \!\!\!\mod p, \;\; \text{$p$ \textup{arbitrary,}} \\ \nonumber
D_n, & \quad n \ge 4, \;\; p = 2, \\
E_6, & \quad p = 2, 3, \\ \nonumber
E_7, & \quad p = 2, 3, \\ \nonumber
E_8, & \quad p = 2, 3, 5.
\end{align}
Conversely, in each of the above classes, there does exist an RDP that violates the \lz.
\end{thm}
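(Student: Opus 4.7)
The statement has two directions: a positive direction asserting that every RDP outside the list \labelcref{lz rdp list} satisfies the \lz, and a negative direction producing explicit counterexamples within each excluded class. The plan is to handle the positive direction by verifying the three hypotheses of \cref{lz obs}, and the negative direction by writing down explicit equations with free tangent sheaf.

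For the positive direction, let $\germ0X$ be an RDP not appearing in \labelcref{lz rdp list}. First I would check that $\germ0X$ is tame in the sense of \cref{dfn tame}; since tameness depends only on the dual graph, the content of this step is a combinatorial verification on the Dynkin diagrams $A_n, D_n, E_6, E_7, E_8$ showing that the complement of \labelcref{lz rdp list} is precisely the class of tame RDPs. Given tameness, hypothesis \labelcref{lz obs.1} of \cref{lz obs} follows from the \lext \cite[Thm.~1.3]{FunnyExt}, so the reflexivity of $\pi_* \Omegal Y1E$ is automatic. Hypothesis \labelcref{lz obs.3} is then handled by appealing to Hirokado's classification \cite{Hirokado19} of almost equivariant RDPs and checking that every tame RDP is almost equivariant. \Cref{lz obs} then yields the \lz for $\germ0X$.

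For the negative direction, in each line of \labelcref{lz rdp list} I must exhibit an RDP on which $\T X$ is free. The case $A_n$ with $n \equiv -1 \pmod p$ is covered by Lipman's original example $xy = z^{n+1}$, where the derivations $\delta_1 = x\del x - y\del y$ and $\delta_2 = \del z$ freely generate $\T X$. For each of the remaining bad pairs $(T, p)$ with $T \in \{D_n, E_6, E_7, E_8\}$ and $p$ as listed, I would consult Artin's normal forms for RDPs in bad characteristic and, for a carefully chosen representative, exhibit two commuting derivations $\delta_1, \delta_2$ preserving the defining equation whose coefficient matrix with respect to $\del x, \del y, \del z$ has a $2 \times 2$ minor not vanishing at the origin. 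Freeness of $\T X$ then follows from Nakayama's lemma applied to the induced map $\O X^{\oplus 2} \to \T X$.

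The main obstacle is the negative direction: each exceptional pair $(T, p)$ must be treated individually. Although the derivations typically follow the pattern of Lipman's example --- a scaling derivation paired with one pointing in a purely inseparable direction --- the precise choice depends on the Artin normal form and must be verified by a direct Jacobian computation. The positive direction, by contrast, should reduce to a clean assembly of the tameness classification, \cite[Thm.~1.3]{FunnyExt}, and \cite{Hirokado19}, with no serious technical difficulty once those ingredients are in place.
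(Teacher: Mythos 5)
Your overall architecture (apply \cref{lz obs} for the positive direction, exhibit explicit free tangent sheaves for the converse) matches the paper's, but the way you propose to verify the hypotheses of \cref{lz obs} contains three genuine errors. First, the complement of~\labelcref{lz rdp list} is \emph{not} the class of tame RDPs: by \cref{tame rdp} the non-tame RDPs are only $A_n$ ($n \equiv -1 \bmod p$), $D_n$ ($p=2$), $E_6$ ($p=3$) and $E_7$ ($p=2$), since $|\det(E_n)| = 9-n$; in particular $E_8$ is tame in every characteristic, yet $E_8$ ($p \le 5$) is on the excluded list. Second, and most importantly, hypothesis~\labelcref{lz obs.1} does not ``follow from tameness via \cite[Thm.~1.3]{FunnyExt}'' --- that theorem goes the other way (tameness plus the \lext imply the \et). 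The \lext itself must be established separately for the RDPs in question, and it genuinely fails for some of them ($D_n^0$ in characteristic $2$, $E_8^0$ in characteristic $5$, etc.); this is the content of \cref{lext rdp}, \cref{Dn sharp} and \cref{En sharp}, and it is precisely the failure of the \lext that forces $E_6, E_7$ ($p = 2,3$) and $E_8$ ($p = 2,3,5$) onto the list even though most of these are tame. Your reduction has no mechanism to exclude them. Third, it is false that every tame RDP is almost equivariant: $E_8^0$ in characteristic $2$ or $3$ is tame but not almost equivariant (see the tables in \cref{rdp list}); the paper instead invokes \cite[Thm.~5.17]{Wahl75} to get equivariance only for RDPs outside the full list~\labelcref{lz rdp list}.

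On the converse direction your plan is workable in spirit (and is what the paper does, by machine, in \cref{rdp list}), but the proposed freeness criterion would fail: already in Lipman's example the generator $x\del x - y\del y$ vanishes at the origin, so no pair of generators of $\T X$ has a $2\times 2$ coefficient minor that is a unit, and the same happens for all the $D$- and $E$-type examples. Nakayama must be applied to $\factor{\T X}{\frm \T X}$ itself --- that is, one must check that the two candidate derivations form a minimal generating set of the module $\T X$, e.g.\ by a syzygy computation --- not to the fiber of the ambient $\O X^{\oplus 3}$.
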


The first item in the above list is precisely Lipman's example.
We see in particular that his example is indeed unique at least among RDPs in characteristic $p \ge 7$, providing a partial answer to \cref{lz ques}.

\begin{rem-plain}
In \cref{lz rdp}, we do not claim that \emph{every} RDP in~\labelcref{lz rdp list} violates the \lz.
For example in characteristic $p = 2$, the tangent sheaf of $E_8^0$, $E_8^1$ and $E_8^2$ is free, while for $E_8^3$ and $E_8^4$ it needs four generators (cf.~\cref{rdp list} for notation).
\end{rem-plain}

\begin{rem}[Fundamental groups]
The local fundamental groups of RDPs are known in all characteristics by~\cite{Durfee79} and~\cite{Artin77}.
Therefore, it is reasonable to ask whether the counterexamples in \cref{lz rdp} are exactly those for which the local fundamental groups are smaller than the corresponding characteristic zero group.
This is indeed true for the $A_n$ singularities, but not in general.
For example, all $E_8$ singularities ($p \le 5$) have ``too small'' fundamental group, but some of them do satisfy the \lz.
It \emph{is} true in general that if the fundamental group is the same as over \C, then the \lz is satisfied.
But in low characteristics, this applies only to a few cases.
\end{rem}

The equations of all RDPs over an algebraically closed field are known by~\cite{Artin77}.
Therefore, \cref{lz rdp} could also have been obtained by computer calculations and in fact, this is what we do in \cref{rdp list}.
But the point of having a conceptual argument is that it can also be applied in situations beyond RDPs, such as \lc singularities.
This is what we are going to do next.

\begin{thm}[LZ conjecture for lc surfaces] \label{lz lc}
Let $\germ0X$ be a \lc surface singularity over an algebraically closed field of characteristic $p > 0$.
Assume that $\germ0X$ is tame and that it is not an RDP contained in~\labelcref{lz rdp list}.
Then $\germ0X$ satisfies the \lz.
\end{thm}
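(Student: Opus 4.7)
The plan is to invoke \cref{lz obs}, so it suffices to verify its three hypotheses. Hypothesis~\labelcref{lz obs.2} (tameness) is part of the standing assumption of the theorem, so nothing is to be shown there. For hypothesis~\labelcref{lz obs.1}, the reflexivity of $\pi_* \Omegal Y1E$, I would appeal to a \lext for tame \lc surface singularities established in~\cite{FunnyExt}. This is precisely the positive-characteristic analogue of the topmost statement in the implication diagram displayed above \cref{lz obs}, and it is available under our tameness hypothesis.

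The bulk of the work lies in verifying hypothesis~\labelcref{lz obs.3}, the almost equivariance of the MGR $\pi \from Y \to X$. I would proceed case by case along the classification of tame \lc surface singularities, which in the tame range mirrors the characteristic-zero list. If $\germ0X$ is an RDP, then by assumption it lies outside~\labelcref{lz rdp list}, so \cref{lz rdp} directly furnishes the conclusion, and no further work is needed in that subcase. For the non-RDP cases one distinguishes: (a) non-canonical klt singularities, which in the tame regime are quotients of smooth germs by finite groups of order coprime to $p$; and (b) strictly \lc singularities, namely simple elliptic singularities and cusps. In case~(a), any global derivation on $\germ0X$ lifts $G$-equivariantly to the smooth Galois cover, and since a minimal resolution of a smooth germ is trivially preserved by lifted derivations, this descends to almost equivariance of $\pi$. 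In case~(b), the exceptional divisor has a very rigid combinatorial shape (a smooth elliptic curve, or a cycle of smooth rational curves), which any derivation is forced to preserve as a set; this again yields almost equivariance in the sense of \cref{dfn mgr equiv}.

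The hard part will be carrying out case~(b) rigorously in positive characteristic: here the combinatorics of the minimal good resolution, while essentially the same as in characteristic zero, must be combined with a positive-characteristic analysis of derivations along the exceptional divisor. The techniques of Wahl~\cite[\S2]{Wahl75} can be adapted, but for each dual-graph type one has to verify that no new char-$p$ pathologies arise that could destroy the lifting of global derivations to~$Y$ in the sense required by \cref{dfn mgr equiv}. Once~\labelcref{lz obs.3} is established in all remaining cases, \cref{lz obs} applies and the \lz for $\germ0X$ follows.
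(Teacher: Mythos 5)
Your overall skeleton (verify the three hypotheses of \cref{lz obs}, delegate the RDP case to \cref{lz rdp}, and treat simple elliptic singularities and cusps via Wahl) matches the paper, but there are two genuine gaps. First, your justification of hypothesis~\labelcref{lz obs.1} is wrong: there is no result saying that tame \lc (or even tame RDP) singularities satisfy the \lext. Tameness is what upgrades the \lext to the \et (\cite[Thm.~1.3]{FunnyExt}); it does not produce the \lext in the first place. Indeed $E_6^0$ in characteristic $2$ is tame (the determinant is $3$) yet violates the \lext by \cref{lext rdp} and \cref{En sharp}. The correct argument in the non-RDP cases is different: once one knows that $E$ is an elliptic curve or a cycle of rational curves with all discrepancies equal to $-1$, the divisor $-(K_Y+E)$ is $\pi$-nef and the \lext follows from~\cite[Thm.~6.1]{FunnyExt}.

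Second, you are missing the reduction that makes the case analysis tractable and that eliminates your case~(a) altogether. Since one may assume $\T X$ free, $K_X$ is Cartier and all discrepancies are integers; by~\cite[Cor.~4.3]{KM98} they are then either all $0$ (an RDP, done by \cref{lz rdp}) or all $-1$ (simple elliptic or cusp). So non-canonical klt quotient singularities never occur, and your sketchy equivariant-lifting argument for them is not needed (nor is it justified as written: in characteristic $p$ a derivation need not preserve the singular locus, which is exactly the phenomenon behind $\partial_z$ in Lipman's example, so ``the derivation is forced to preserve $E$ as a set'' is not a valid step). Finally, for case~(b) you leave the almost equivariance as ``the hard part'' to be adapted from Wahl; in fact no adaptation is needed, as \cite[Prop.~2.12]{Wahl75} (elliptic $E$) and \cite[Cor.~2.16]{Wahl75} (cusps, after checking its two hypotheses) apply directly and give full equivariance. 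As it stands, the proposal leaves both the \lext and the equivariance unproven in the only cases where there is anything to prove.
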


\cref{lz lc sharp} shows that the statement of \cref{lz lc} is sharp in the sense that the tameness assumption cannot be dropped, even if $X$ is $F$-pure.
In terms of \cref{lz ques}, this suggests that if we do not impose a tameness condition, then Lipman's example (which itself is not tame) is very far from being unique.

Going back to the tame setting, by combining \cref{lz lc} with the calculations in \cref{rdp list}, we arrive at the following corollary.

\begin{cor}[LZ conjecture for tame lc surfaces] \label{cor tame lc}
Tame \lc surface singularities over an algebraically closed field satisfy the \lz, except for precisely seven RDPs of type $E_n$ in characteristic $p \le 5$:
\begin{itemize}
\item $E_6^0, E_8^0, E_8^1, E_8^2$ \textup{($p = 2$)},
\item $E_7^0, E_8^0$ \textup{($p = 3$)},
\item $E_8^0$ \textup{($p = 5$)}. \qed
\end{itemize}
\end{cor}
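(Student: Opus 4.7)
The plan is to combine \cref{lz lc} with the explicit calculations in \cref{rdp list}. By \cref{lz lc}, every tame \lc surface singularity satisfies the \lz except possibly those which are RDPs appearing in the list~\eqref{lz rdp list}. Thus it suffices to determine, for each RDP in~\eqref{lz rdp list}, whether it is simultaneously tame and whether its tangent sheaf is locally free; the seven singularities in the statement should turn out to be exactly those satisfying both conditions.

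The first step is to exclude entire families on tameness grounds. Lipman's $A_n$-singularities with $n \equiv -1 \pmod p$ are explicitly noted to be non-tame in the discussion preceding \cref{cor tame lc}, so the whole $A$-series drops out. Tameness of the remaining candidates can be read off from the dual graph using the criterion recalled in the remark after \cref{lz obs}. Performing this inspection further eliminates the $D_n$-series in characteristic~$2$ and some of the smaller $E$-type RDPs, reducing the candidate pool to a short finite list of tame $E$-type RDPs in characteristic $p \le 5$.

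The second step is to compute, for each remaining tame candidate, a minimal generating set of $\T X$. Using Artin's normal forms~\cite{Artin77}, one writes out the module of derivations of the local ring preserving the defining equation and counts generators. As flagged already in the remark following \cref{lz rdp}, the outcome depends sensitively on the particular singularity: for instance, among the $E_8$ singularities in characteristic~$2$ the tangent sheaf is free precisely for $E_8^0, E_8^1, E_8^2$, while $E_8^3$ and $E_8^4$ require four generators. Aggregating these calculations from \cref{rdp list} yields exactly
\[ E_6^0,\; E_8^0,\; E_8^1,\; E_8^2 \;\; (p = 2), \qquad E_7^0,\; E_8^0 \;\; (p = 3), \qquad E_8^0 \;\; (p = 5) \]
as the complete list of tame \lc surface singularities violating the \lz; every other tame RDP appearing in~\eqref{lz rdp list} has $\T X$ requiring at least three generators and therefore trivially satisfies the \lz.

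The main obstacle is the second step, namely the systematic but delicate case-by-case computation carried out in \cref{rdp list}. Once those tables are in place, the corollary follows by simply intersecting the tame candidates with the RDPs possessing a free tangent sheaf.
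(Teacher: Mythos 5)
Your proposal is correct and follows exactly the route the paper takes: the corollary carries an immediate \qed, its proof being precisely the combination of \cref{lz lc} (reducing the problem to tame RDPs on the list~\labelcref{lz rdp list}), the tameness criterion of \cref{tame rdp} (which eliminates the $A_n$ series, the $D_n$ series in $p=2$, $E_6$ in $p=3$, and $E_7$ in $p=2$), and the tables of \cref{rdp list} (which identify the seven remaining tame $E$-type RDPs with free tangent sheaf). Your intersection of ``tame'' with ``free tangent sheaf'' reproduces the paper's list exactly, so there is nothing to add.
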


Since none of the above ``exceptional'' RDPs are $F$-pure, we also have:

\begin{cor}[LZ conjecture for tame $F$-pure surfaces] \label{cor tame F-pure}
The \lz holds for tame $F$-pure normal surface singularities over an algebraically closed field of characteristic $p > 0$. \qed
\end{cor}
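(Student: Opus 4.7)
The plan is to deduce this corollary directly from \cref{cor tame lc}. The idea is that every $F$-pure normal surface singularity is automatically log canonical, so that \cref{cor tame lc} applies and reduces the verification to a finite check on the seven exceptional RDPs listed there.

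First, I would invoke the well-known fact that an $F$-pure normal surface singularity over an algebraically closed field of positive characteristic is log canonical. This is a classical result of Hara--Watanabe (and in the surface case goes back essentially to Mehta--Srinivas and Fedder). Once this is in place, any tame $F$-pure normal surface singularity $\germ0X$ is a tame lc surface singularity, so \cref{cor tame lc} says that either $\germ0X$ satisfies the \lz or it is one of the seven RDPs
\[ E_6^0,\ E_8^0,\ E_8^1,\ E_8^2 \quad (p=2), \qquad E_7^0,\ E_8^0 \quad (p=3), \qquad E_8^0 \quad (p=5). \]

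The second step is then to verify that none of these seven RDPs is $F$-pure. Here I would use the explicit equations of the RDPs from Artin's classification (as recorded in \cref{rdp list}) together with Fedder's criterion: a hypersurface singularity $\{f=0\} \subset \A3_k$ with $f \in \frm = (x,y,z)$ is $F$-pure if and only if $f^{p-1} \notin \frm^{[p]}$, where $\frm^{[p]} = (x^p, y^p, z^p)$. A direct computation of $f^{p-1} \bmod \frm^{[p]}$ for each of the seven equations in question shows that $f^{p-1} \in \frm^{[p]}$ in each case, so none of these RDPs is $F$-pure. (For example, for $E_8^0$ in characteristic~$5$ one computes that every monomial of $f^{4}$ already lies in $\frm^{[5]}$, and similar short calculations dispose of the other six cases.)

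The main obstacle here is really just bookkeeping: one must carry out Fedder's criterion for each of the seven explicit equations, but none of these computations is deep -- each is a short monomial check in a polynomial ring in three variables. Given that and the standard implication ``$F$-pure $\Rightarrow$ log canonical'', the corollary follows from \cref{cor tame lc} with no further geometric input.
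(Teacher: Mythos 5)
Your proof is correct and follows exactly the paper's (implicit) argument: the corollary is derived from \cref{cor tame lc} together with the observation that none of the seven exceptional RDPs are $F$-pure, the $F$-purity data being recorded in the tables of \cref{rdp list} via Fedder's criterion. You merely make explicit the standard implication ``$F$-pure $\Rightarrow$ log canonical'' and the monomial computations that the paper delegates to the appendix, so no further comment is needed.
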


It would be nice to have a direct proof of \cref{cor tame F-pure} that does not rely on any case-by-case analysis of explicit equations.
One might try to show that $F$-pure singularities satisfy the \lext and that they are almost equivariant, in order to apply \cref{lz obs}.
The first condition is probably true and could be proven using the classification results of~\cite{MehtaSrinivas91} and~\cite{Hara98}.
Unfortunately, the latter condition fails: the rational double point $D_{2n}^{n-1}$ ($p = 2$) is $F$-pure, but not almost equivariant.

\subsection*{Acknowledgements}

This work was begun, and completed to about 69\%, while the author stayed at the University of Utah in 2018/19, funded by a research fellowship of the DFG (= Deutsche Forschungsgemeinschaft).
I would like to thank the anonymous referee for her/his precise suggestions for improving the paper.

\section{Basic definitions}

By a \emph{(normal) surface singularity $\germ0X$ defined over a field $k$} we mean a scheme of the form $X = \Spec R$, where $(R, \frm)$ is a two-dimensional excellent (normal) local ring containing a field isomorphic to $k = \factor R\frm$.
By~\cite[Thm.]{Lipman78}, a normal surface singularity admits a resolution, and hence also a unique minimal good resolution.

\begin{dfn} \label{dfn tame}
Let $\germ0X$ be a normal surface singularity defined over a field~$k$.
We say that $\germ0X$ is \emph{tame} if for every resolution of singularities $\pi \from Y \to X$ with exceptional curves $E_1, \dots, E_\ell$, the determinant of the intersection matrix $(E_i \cdot E_j)$ is not divisible by $\operatorname{char} k$.
\end{dfn}

By the calculations in~\cite[Sec.~8.A]{FunnyExt}, it suffices to check tameness on a single resolution.
We also recall the following notation from~\cite{FunnyExt} in the special case of surface singularities.

\begin{dfn}[Extension Theorems] \label{dfn ext thm}
Let $\germ0X$ be a normal surface singularity defined over a field~$k$.
\begin{itemize}
\item We say that \emph{$\germ0X$ satisfies the \et (for $1$-forms)} if for some/any log resolution $\pi \from Y \to X$ with exceptional divisor $E = \Exc(\pi)$, the natural inclusion
\[ \pi_* \Omegap{Y/k}1 \xhookrightarrow{\quad} \Omegar{X/k}1 \defn \big( \Omegap{X/k}1 \big) \ddual \]
is an isomorphism.
Equivalently, the sheaf $\pi_* \Omegap{Y/k}1$ is reflexive.
\item We say that \emph{$\germ0X$ satisfies the \lext (for $1$-forms)} if for some/any $\pi$ as above, the natural inclusion
\[ \pi_* \Omegal{Y/k}1E \xhookrightarrow{\quad} \Omegar{X/k}1 \]
is an isomorphism.
Equivalently, the sheaf $\pi_* \Omegal{Y/k}1E$ is reflexive.
\end{itemize}
\end{dfn}

\begin{dfn}[Almost equivariant MGR] \label{dfn mgr equiv}
Let $\germ0X$ be a normal surface singularity defined over a field~$k$, with minimal good resolution $\pi \from Y \to X$.
Consider the natural short exact sequence of coherent sheaves
\[ 0 \lto \pi_* \T Y \lto \T X \lto \cQ \lto 0. \]
We say that $\pi$ is \emph{almost equivariant} if $\dim_\C \cQ_0 \le 1$.
Equivalently, and purely in terms of $Y$,
\[ \dim_\C \factor{\HH0.Y \setminus E.\T Y.}{\HH0.Y.\T Y.} \le 1, \]
where $E \subset Y$ is the exceptional locus of $\pi$.
\end{dfn}

The name stems from the case where $\cQ = 0$, or equivalently, when the sheaf $\pi_* \T Y$ is reflexive.
In this case one says that $\pi$ is \emph{equivariant}~\cite{Wahl75}.
This terminology comes from the observation that if $k = \C$ and $X$ is compact, then the identity component of the automorphism group $\Autn(X)$ will act on $Y$ in such a way that $\pi$ is equivariant.
For more information on this topic, see e.g.~\cite[Sec.~4]{GKK10}.

\section{The \lext for rational double points}

In this section, we determine which RDPs satisfy the \lext.
This is necessary for the application of \cref{lz obs} in the proof of \cref{lz rdp}.

\begin{thm}[\lext for RDPs] \label{lext rdp}
Let $\germ0X$ be a two-dimensional RDP over an algebraically closed field $k$ of characteristic $p > 0$.
Then $\germ0X$ satisfies the \lext, except in the following cases:
\begin{align} \label{lext rdp list} \nonumber
D_n, & \quad n \ge 4, \;\; p = 2, \\
E_6, & \quad p = 2, 3, \\ \nonumber
E_7, & \quad p = 2, 3, \\ \nonumber
E_8, & \quad p = 2, 3, 5.
\end{align}
Conversely, in each of the above classes, there does exist an RDP that violates the \lext.
\end{thm}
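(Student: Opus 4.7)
The plan is to prove both directions by combining Artin's explicit classification of RDPs in positive characteristic with a case analysis on the minimal good resolution $\pi \from Y \to X$. The principal technical tool is the residue sequence
\[ 0 \lto \Omegap{Y/k}1 \lto \Omegal{Y/k}1E \xrightarrow{\;\res\;} \bigoplus_i \O{E_i} \lto 0, \]
whose push-forward under $\pi$ translates \lext into concrete statements about the restriction and residue maps for $1$-forms on the exceptional components.

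For the positive direction, I split into two groups. First, the $A_n$ singularities never appear in the exception list, so I must verify \lext for them in \emph{every} characteristic, including the non-tame case $p \mid n+1$. Here the dual graph is a chain of smooth $(-2)$-curves, and the resolution admits a two-chart atlas on which $E$ has components given by coordinate axes. The generating logarithmic forms $\d x / x$ and $\d y / y$ can be written down explicitly on each chart and their push-forwards verified to extend reflexively. For the remaining non-exceptional cases ($D_n$ with $p \ge 3$, $E_6$ with $p \ge 5$, $E_7$ with $p \ge 5$, $E_8$ with $p \ge 7$), the singularity is tame, and I would invoke (or specialize to the RDP setting) the sufficient conditions for \lext from~\cite{FunnyExt}; in the tame ADE situation these essentially reduce to cohomological vanishings on the exceptional curves, which can be verified by hand from the dual graph.

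For the negative direction, I would produce an explicit RDP in each exception class whose \lext fails. Starting from Artin's defining equation $f(x,y,z) = 0$ for a suitable subtype $D_n^r$ or $E_n^r$, one computes the minimal resolution with explicit coordinate charts. A candidate reflexive $1$-form $\omega$ on $X$ is built from Koszul-type expressions in the partial derivatives of $f$, exploiting the fact that in the bad characteristic one such derivative degenerates (for example, becomes a perfect $p$-th power or vanishes identically). Its pullback $\pi^*\omega$ then develops a pole of order $\ge 2$ along some $E_i$, which is read off directly from the local coordinates, proving that $\omega$ does not lie in $\pi_* \Omegal Y1E$.

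The main obstacle is the combinatorial proliferation of Artin's subtypes $D_n^r$ and $E_n^r$ in low characteristic, each with its own equation and resolution data. In the counterexample direction, for each exception class one must identify a single subtype whose explicit resolution permits a concise pole-order computation. In the positive direction for the medium-characteristic $D_n, E_6, E_7, E_8$ cases, tameness alone is not sufficient (for instance $E_8$ is always tame yet still fails \lext for $p \le 5$), so one must carefully extract from~\cite{FunnyExt} an additional hypothesis that is satisfied here thanks to the particular structure of the ADE intersection form.
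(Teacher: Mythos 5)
Your overall strategy coincides with the paper's: the positive direction is delegated to the extension theorems of~\cite{FunnyExt}, and the negative direction is established by exhibiting, for each exception class, an explicit reflexive $1$-form whose pullback to an explicit resolution acquires a pole of order $\ge 2$ along the exceptional locus. Your counterexample recipe (forms of the shape $\sigma = g^{-1}\,\d x = h^{-1}\,\d y$ coming from the relation $\d f = 0$, with one partial derivative degenerating in the bad characteristic) is exactly what the paper does for $D_n^0$ and $E_{6,7}^0$, with $E_8$ quoted from~\cite[Ex.~10.1]{FunnyExt}. Your treatment of $A_n$ by explicit charts is a legitimate alternative to the paper's citation of~\cite[Thm.~6.1]{FunnyExt} (the point being that for a chain of $(-2)$-curves, $-(K_Y+E)$ is $\pi$-nef, which is what that theorem requires; the minimal resolution needs $n+1$ charts rather than two, but the computation is routine).

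The one genuine soft spot is the positive direction for $D_n$ ($p=3,5$) and $E_6,E_7$ ($p=5$). You correctly observe that tameness of the singularity is not the right hypothesis --- $E_8$ is tame in every characteristic yet fails the \lext for $p\le 5$ --- but you stop at saying that ``one must carefully extract an additional hypothesis,'' without identifying it. The paper pins this down: for $p\ge 7$ everything is~\cite[Thm.~1.2]{FunnyExt}, and for the leftover cases $D_n$ ($p=3,5$) and $E_{6,7}$ ($p=5$) the arguments of that proof still go through because a \emph{tame resolution} exists for those primes (cases~(7.8.6) and~(7.8.7) of that proof); concretely, the bad primes are those dividing the multiplicities occurring in the resolution process, e.g.\ only $p=2$ must be excluded for $D_n$. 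Your suggestion that the relevant condition is ``readable from the dual graph'' is in the right spirit but, as stated, cannot distinguish $E_8$ in characteristic $5$ from $E_8$ in characteristic $7$ using the intersection form alone, so the argument as written does not close.
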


Note that not every singularity in~\labelcref{lext rdp list} violates the \lext.
E.g.~by \cref{D 2n n-1} below, the ($F$-pure) singularity $D_{2n}^{n-1}$ in characteristic $p = 2$ satisfies it.

\begin{proof}[Proof of \cref{lext rdp}]
If $p \ge 7$, the claim is a special case of~\cite[Thm.~1.2]{FunnyExt}.
Hence the only cases left are $A_n$ ($p = 2, 3, 5$), $D_n$ ($p = 3, 5$), and $E_{6,7}$ ($p = 5$).
For $A_n$, logarithmic extension immediately follows from~\cite[Thm.~6.1]{FunnyExt}.
For $D_n$ and $E_{6,7}$, the arguments from the proof of~\cite[Thm.~1.2]{FunnyExt} still apply because for the primes in question there does exist a ``tame resolution'', e.g.~for $D_n$ only $p = 2$ needs to be excluded.
Cf.~in particular cases~(7.8.6) and~(7.8.7) in that proof.

It remains to check the second half of the statement.
This is done in the following examples.
\end{proof}

\begin{exm}[$D_n$ singularities] \label{Dn sharp}
Fix a field $k$ of characteristic $p = 2$.
Then for the $D^0_{2n}$ singularity $X = \set{f = z^2 + x^2y + xy^n = 0} \subset \A3_k$, $n \ge 2$ arbitrary, the \lext does not hold.
More precisely, note that \kahler differentials on $X$ satisfy the relation $y^n \d x + (x^2 + nxy^{n-1}) \d y = 0$ and hence we may consider the (a priori only rational) $1$-form
\[ \sigma = y^{-n} \d y = \big( x^2 + nxy^{n-1} \big) \inv \d x. \]
As $y$ and $x^2 + nxy^{n-1}$ vanish simultaneously only at the origin, $\sigma$ is in fact a regular $1$-form on $X \setminus \set0 = \Reg X$.
In other words, $\sigma \in \HH0.X.\Omegar X1.$ is a reflexive differential form on $X$.
We blow up the origin $n-1$ times in a row, yielding a map $\phi \from \wt{\A{3}_k} \to \A3_k$.
In suitable coordinates, this map is given by
\[ \phi(u, v, w) = (uv^{n-1}, v, v^{n-1}w). \]
We compute
\[ \phi^*(f) = v^{2n-2}w^2 + u^2v^{2n-1} + uv^{2n-1} = v^{2n-2} \cdot \underbrace{\big( w^2 + uv(u + 1) \big)}_{\substack{\text{equation of strict} \\[.3ex] \text{transform $\wt X$ of $X$}}}. \]
We see that $\wt X$ can be parametrized rationally by the $(u, w)$-plane, namely by setting $v = \frac{w^2}{u(u + 1)}$.
In this parametrization, the pullback of $\sigma$ is given by
\[ \phi^*(\sigma) = v^{-n} \d v = \left( \frac{w^2}{u(u + 1)} \right)^{-n} \d \left( \frac{w^2}{u(u + 1)} \right) = \dots = \big( u(u + 1) \big)^{n-2} \frac{\d u}{w^{2n-2}}. \]
This shows that the extension of $\sigma$ to $\wt X$ does not have at most logarithmic poles.
Even worse, as $n$ goes to infinity, the pole orders become arbitrarily large.

Similar calculations for the $D^0_{2n + 1}$ singularities $\set{z^2 + x^2y + y^nz = 0}$ show that the reflexive form $\sigma = y^{-n} \d y = \big( x^2 + n y^{n-1}z \big) \inv \d z$ does not extend logarithmically.
\end{exm}

\begin{exm}[The $D_{2n}^{n-1}$ singularity] \label{D 2n n-1}
According to the SINGULAR program in \cref{compute mgs}, for the $D_{2n}^{n-1}$ singularity $X = \set{z^2 + x^2y + xy^n + xyz = 0}$ the tangent sheaf is freely generated by
\begin{align*}
v_1 & = y\del y + (x + z + ny^{n-1}) \del z, \\
v_2 & = x\del x + (z + y^{n-1}) \del z.
\end{align*}
Consider the $1$-forms $\alpha_1 = \d\log y \defn y\inv \d y$ and $\alpha_2 = \d\log x \defn x\inv \d x$.
Then we have $\alpha_i(v_j) = \delta_{ij}$ and hence $\set{\alpha_1, \alpha_2}$ is a basis of $\Omegar X1$.
(Note that e.g.~$\alpha_1$ does not actually have a logarithmic pole along $\set{y = 0}$ because $y$ vanishes to order two along that divisor.)
Clearly, the pullback of each $\alpha_i$ to any resolution has only logarithmic poles and so we see that $X$ satisfies the \lext.
\end{exm}

\begin{exm}[$E_n$ singularities] \label{En sharp}
Due to their similarity to \cref{Dn sharp}, we omit the calculations and only write down in each case the defining equation and a reflexive form that does not extend logarithmically.
We only treat $E_6$ and $E_7$ ($p = 2, 3$) since $E_8$ ($p = 2, 3, 5$) has already been done in~\cite[Ex.~10.1]{FunnyExt}.
\vspace{.5ex}
\begin{itemize}
\item $E_6^0$, $p = 2$: \; $f = z^2 + x^3 + y^2z$, \hspace{.345em} $\sigma = y^{-2} \d x = x^{-2} \d z$,
\item $E_7^0$, $p = 2$: \; $f = z^2 + x^3 + xy^3$, \; $\sigma = x \inv y^{-2} \d x = \big( x^2 + y^3 \big)\inv \d y$,
\item $E_6^0$, $p = 3$: \; $f = z^2 + x^3 + y^4$, \hspace{.85em} $\sigma = z \inv \d y = y^{-3} \d z$,
\item $E_7^0$, $p = 3$: \; $f = z^2 + x^3 + xy^3$, \; $\sigma = z \inv \d x = y^{-3} \d z$.
\end{itemize}
\end{exm}

\section{Proof of main results}

\subsection*{Proof of \cref{lz obs}}

Since $\germ0X$ is tame and satisfies the \lext, it also satisfies the \et by~\cite[Thm.~1.3]{FunnyExt}.
In other words, we have $\HH0.X.\Omegar X1. = \HH0.Y.\Omegap Y1.$.

Assume now that $\T X$ is freely generated by the two derivations $\set{v_1, v_2}$.
Since $\pi \from Y \to X$ is almost equivariant by assumption, the images of the $v_i$ in $\factor{\T X}{\pi_* \T Y}$ are linearly dependent, say $v_1 + \lambda v_2 = 0$ for some $\lambda \in \C$.
After replacing $v_1$ by $v_1 + \lambda v_2$, we may thus assume that $v_1$ lifts to $\wt v_1 \in \HH0.Y.\T Y.$.
Furthermore, in view of the short exact sequence
\[ 0 \lto \T Y(- \log E) \lto \T Y \lto \bigoplus_{i = 1}^\ell N_{E_i / Y} \lto 0 \]
and the negativity of the self-intersections $E_i^2 < 0$, it is clear that $\HH0.Y.\T Y. = \HH0.Y.\T Y(- \log E).$.

Let $\set{\alpha_1, \alpha_2}$ be the basis of $\Omegar X1$ dual to $\set{v_1, v_2}$.
By the \et, we can lift $\alpha_i$ to $\wt \alpha_i \in \HH0.Y.\Omegap Y1.$.
Then on $Y$, we still have $\wt \alpha_1(\wt v_1) \equiv 1$.
In particular, $\wt v_1$ has no zeroes.
But for each $i$, the derivation $\wt v_1$ restricts to a derivation $\wt v_1\big|_{E_i}$ on $E_i$, and the latter vanishes at each point of $(E - E_i) \cap E_i$, cf.~\cite[(1.10.2)]{Wahl75}.
Unless $E$ is empty, it follows that $E = E_1$ is smooth, and therefore an elliptic curve, since it carries the nowhere vanishing derivation $\wt v_1\big|_E$.
By~\cite[Prop.~2.12]{Wahl75}, $\pi$ is actually equivariant.
So we also have a lift $\wt v_2 \in \HH0.Y.\T Y(- \log E).$, and it satisfies $\wt\alpha_i(\wt v_j) \equiv \delta_{ij}$ on $Y$.

Pick an arbitrary (smooth) point $p \in E$.
Evaluating the above relation at $p$ shows that $\wt v_1(p), \wt v_2(p) \in T_p Y$ are linearly independent.
However, they are both contained in the one-dimensional subspace $T_p E \subset T_p Y$.
This contradiction shows that $E = \emptyset$, hence $\pi$ is an isomorphism and $\germ0X$ is smooth. \qed

\begin{lem}[Tame RDPs] \label{tame rdp}
Let $\germ0X$ be an RDP over an algebraically closed field $k$ of characteristic $p > 0$.
The cases where $\germ0X$ is not tame are exactly the following:
\begin{align} \label{det rdp list} \nonumber
\phantom{\hspace{4em}}
A_n, & \quad n \equiv -1 \!\!\!\mod p, \;\; \text{$p$ \textup{arbitrary,}} \\ \nonumber
D_n, & \quad n \ge 4, \;\; p = 2, \\ \nonumber
E_6, & \quad p = 3, \\ \nonumber
E_7, & \quad p = 2.
\end{align}
\end{lem}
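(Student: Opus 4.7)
The plan is to reduce the lemma to a textbook determinant computation on the Cartan matrix. The key input is Artin's classification \cite{Artin77} of two-dimensional RDPs in arbitrary characteristic: the minimal resolution $\pi \from Y \to X$ of an RDP of type $A_n$, $D_n$, $E_6$, $E_7$ or $E_8$ has exceptional divisor consisting of smooth rational $(-2)$-curves whose dual graph is the Dynkin diagram of the corresponding ADE type. This minimal resolution is already a good resolution, hence agrees with the MGR. By the remark following \cref{dfn tame} (based on \cite[Sec.~8.A]{FunnyExt}), tameness may be checked on any single resolution; I would therefore apply \cref{dfn tame} to this minimal resolution.

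With this setup, the intersection matrix $(E_i \cdot E_j)$ is precisely $-C$, where $C$ is the Cartan matrix of the associated simply-laced root system. Consequently
\[ \big|\det(E_i \cdot E_j)\big| \;=\; |\det C|. \]
I would then invoke (or derive by cofactor expansion along a terminal vertex of the Dynkin diagram) the classical values
\[ |\det C(A_n)| = n+1, \quad |\det C(D_n)| = 4, \quad |\det C(E_6)| = 3, \quad |\det C(E_7)| = 2, \quad |\det C(E_8)| = 1. \]
Reading off the primes $p$ dividing each of these integers immediately yields \labelcref{det rdp list}: $p \mid n+1$ for $A_n$, $p = 2$ for $D_n$, $p = 3$ for $E_6$, $p = 2$ for $E_7$, and no $p$ at all for $E_8$.

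There is no genuine obstacle here. Both inputs — the ADE shape of the dual graph of the minimal resolution (valid in every characteristic by Artin's work) and the Cartan determinants — are entirely standard. The only subtlety worth flagging is the use of \cite[Sec.~8.A]{FunnyExt} to reduce to a single resolution, without which one would have to argue that blow-ups preserve the relevant divisibility, but this has already been handled in \cite{FunnyExt}.
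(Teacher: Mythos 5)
Your proposal is correct and follows essentially the same route as the paper: the paper's proof simply records the determinants $|\det(A_n)| = n+1$, $|\det(D_n)| = 4$, $|\det(E_n)| = 9-n$ of the intersection matrices on the minimal resolution and reads off the divisibility, which matches your Cartan-matrix computation exactly. The reduction to a single resolution via \cite[Sec.~8.A]{FunnyExt} is likewise the paper's stated justification.
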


\begin{proof}
The determinants of the intersection matrices of the exceptional curves in the minimal resolution are as follows:
\[ \begin{array}{lcl}
\big|\!\det(A_n)\big| & = & n + 1, \\[1ex]
\big|\!\det(D_n)\big| & = & 4, \\[1ex]
\big|\!\det(E_n)\big| & = & 9 - n.
\end{array} \]
The lemma follows immediately.
\end{proof}

\subsection*{Proof of \cref{lz rdp}}

Let $\germ0X$ be an RDP not contained in~\labelcref{lz rdp list}.
Then by \cref{lext rdp} and \cref{tame rdp}, the singularity $\germ0X$ satisfies the \lext and it is tame.
Also, the MGR of $\germ0X$ is equivariant by~\cite[Thm.~5.17]{Wahl75}.
Now apply \cref{lz obs} to get the first part of the statement.
Concerning the second part, e.g.~the following singularities have free tangent sheaf:
\begin{itemize}
\item $A_n$, \, $n \equiv -1 \!\mod p$, \, $p$ arbitrary,
\item $D_{2n}^0$ and $D_{2n + 1}^0$, \, $p = 2$,
\item $E_6^0$, $E_7^0$, $E_8^0$, \, $p = 2$,
\item $E_6^0$, $E_7^0$, $E_8^0$, \, $p = 3$,
\item $E_8^0$, \, $p = 5$.
\end{itemize}
For the verification of the above list, we refer to \cref{rdp list}. \qed

\subsection*{Proof of \cref{lz lc}}

Let $\germ0X$ be a \lc surface germ.
The possibilities for such $X$ have been classified in~\cite[Ch.~3]{Kollar13}, see also~\cite[Sec.~7.B]{FunnyExt}.
If we want to prove the \lz for a given $X$, this means that we are assuming $\T X$ to be free.
In particular, $K_X$ is Cartier and hence all discrepancies are integers.
This allows us to exclude many cases in the classification.

More precisely, in the minimal resolution $f \from X' \to X$, either all discrepancies $a_i = 0$, or all $a_i = -1$, by~\cite[Cor.~4.3]{KM98} (whose proof works in any characteristic).
\begin{itemize}
\item If all $a_i = 0$, then $\germ0X$ is an RDP and we have already handled this case in \cref{lz rdp}.
\item If all $a_i = -1$, then $\germ0X$ is either simple elliptic or a cusp.
(In case $\Exc(f)$ is a nodal rational curve, we need to blow up the node to get the minimal \emph{good} resolution $\pi \from Y \to X$.
The discrepancy of the new $(-1)$-curve is still $-1$.)
\end{itemize}
In the simple elliptic case, $E = \Exc(\pi)$ is an elliptic curve and in the cusp case, it is a cycle of smooth rational curves.
In both cases, $-(K_Y + E)$ is $\pi$-nef and the \lext for $\germ0X$ follows from~\cite[Thm.~6.1]{FunnyExt}.

In the simple elliptic case, $\pi$ is equivariant by~\cite[Prop.~2.12]{Wahl75}.
In the cusp case, we apply~\cite[Cor.~2.16]{Wahl75} instead (the first condition is vacuous since the set $\sB$ of ``rational boundary components'' of $E$ is empty, and a component $E_j$ as in the second condition exists because the fundamental cycle $Z_0$ satisfies $Z_0^2 < 0$).
Since $\germ0X$ is tame by assumption, \cref{lz obs} now applies to show that the \lz holds for $\germ0X$. \qed

\begin{exm}[Sharpness of \cref{lz lc}] \label{lz lc sharp}
Set $k = \F9$, and let $a \in k^\times$ be a generator with minimal polynomial $X^2 - X - 1$.
Consider the surface $X \subset \A3_{\wb k}$ defined by the equation $y^2z - x(x - az)(x + z) = 0$.
Thus $X$ is the cone over the elliptic curve $C \subset \P^2_{\wb k}$ defined by the same equation.
In particular, $\germ0X$ is log canonical and by the proof of \cref{lz lc}, it satisfies the \lext and the MGR is equivariant.
Also, $C$ is ordinary by~\cite[Ch.~IV, Prop.~4.21]{Har77} and therefore $\germ0X$ is $F$-pure by~\cite[Thm.~1.2]{MehtaSrinivas91}.
(Alternatively, one may use Fedder's criterion~\cite{Fedder83}.)
However, it is not tame because the exceptional curve $E \isom C$ has $E^2 = -3$.
By the SINGULAR program in \cref{compute mgs}, the tangent sheaf $\T X$ is freely generated by
\[ x \del x + y \del y + z \del z \quad \text{and} \quad a^5 y \del x + (x + a^6 z) \del y. \]
Hence $\germ0X$ violates the \lz.
It also follows that $\germ0X$ does not satisfy the \et (otherwise we could apply the arguments from the last paragraph of the proof of \cref{lz obs} to conclude that $X$ is smooth).
\end{exm}

\begin{appendices}

\crefalias{section}{appendix}

\section{Computations on all the rational double points} \label{rdp list}

In this appendix, we work through the list of all RDPs over an algebraically closed field~\cite{Artin77}.
We determine which of them are $F$-pure and satisfy the \lz, using the program in \cref{compute mgs}.
The information about $F$-purity is in principle contained in~\cite[Thm.~1.2 and Rem.~1.3]{Hara98}, albeit not in the exhaustive form presented here.
We also include the information whether the MGR is almost equivariant, which can easily be extracted from~\cite[Thms.~4.1 and~5.1]{Hirokado19}.
For the reader's convenience, we first list the ``classical'' equations from characteristic zero, which are also valid in sufficiently high characteristic:
\begin{equation*}
\begin{aligned}
A_n\from \quad & xy + z^{n+1} = 0 && \quad (n \ge 1) \\
D_n\from \quad & z^2 + x^2y + y^{n-1} = 0 && \quad (n \ge 4) \\
E_6\from \quad & z^2 + x^3 + y^4 = 0 \\
E_7\from \quad & z^2 + x^3 + xy^3 = 0 \\
E_8\from \quad & z^2 + x^3 + y^5 = 0 \\
\end{aligned}
\end{equation*}

\begin{table}[h!]
\begin{tabular}{|p{3em}|p{8em}|p{6.5em}|l|p{3.5em}|l|}
\hline
& Equation & Parameters & $F$-pure & almost equiv. & LZ holds \\ \hline \hline

$A_n$ & classical & \text{$n \ge 1$}, \text{$n \equiv -1 \!\!\mod p$} & \checkmark & \checkmark & --- \\ \hline
      &           & \text{$n \ge 1$}, \text{$n \not\equiv -1 \!\!\mod p$} & \checkmark & \checkmark & \checkmark \\ \hline
\end{tabular}
\caption{The $A_n$ singularities in characteristic $p \ge 2$}
\end{table}

\begin{table}[h!]
\begin{tabular}{|p{3em}|p{8em}|p{6.5em}|l|p{3.5em}|l|}
\hline
& Equation & Parameters & $F$-pure & almost equiv. & LZ holds \\ \hline \hline

$D_{2n}^0$ & $z^2 + x^2y + xy^n$ & $n \ge 2$ & --- & --- & --- \\ \hline
$D_{2n}^r$ & $z^2 + x^2y + xy^n + xy^{n-r}z$ & \text{$n \ge 2$}, \text{$1 \le r \le n - 2$} & --- & --- & --- \\ \hline
          &                                & \text{$n \ge 2$}, \text{$r = n - 1$} & \checkmark & --- & --- \\ \hline
$D_{2n+1}^0$ & $z^2 + x^2y + y^nz$ & $n \ge 2$ & --- & --- & --- \\ \hline
$D_{2n+1}^r$ & $z^2 + x^2y + y^nz + xy^{n-r}z$ & \text{$n \ge 2$}, \text{$1 \le r \le n - 2$} & --- & --- & \checkmark \\ \hline
            &                               & \text{$n \ge 2$}, \text{$r = n - 1$} & \checkmark & \checkmark & \checkmark \\ \hline \hline

$E_6^0$ & $z^2 + x^3 + y^2z$ & & --- & \checkmark & --- \\ \hline
$E_6^1$ & $z^2 + x^3 + y^2z + xyz$ & & \checkmark & \checkmark & \checkmark \\ \hline \hline

$E_7^0$ & classical & & --- & --- & --- \\ \hline
$E_7^1$ & $z^2 + x^3 + xy^3 + x^2yz$ & & --- & --- & --- \\ \hline
$E_7^2$ & $z^2 + x^3 + xy^3 + y^3z$ & & --- & --- & --- \\ \hline
$E_7^3$ & $z^2 + x^3 + xy^3 + xyz$ & & \checkmark & \checkmark & \checkmark \\ \hline \hline

$E_8^0$ & classical & & --- & --- & --- \\ \hline
$E_8^1$ & $z^2 + x^3 + y^5 + xy^3z$ & & --- & --- & --- \\ \hline
$E_8^2$ & $z^2 + x^3 + y^5 + xy^2z$ & & --- & --- & --- \\ \hline
$E_8^3$ & $z^2 + x^3 + y^5 + y^3z$ & & --- & \checkmark & \checkmark \\ \hline
$E_8^4$ & $z^2 + x^3 + y^5 + xyz$ & & \checkmark & \checkmark & \checkmark \\ \hline
\end{tabular}
\caption{The $D_n$ and $E_n$ singularities in characteristic $p = 2$}
\end{table}

\begin{table}[h!]
\begin{tabular}{|p{3em}|p{8em}|p{6.5em}|l|p{3.5em}|l|}
\hline
& Equation & Parameters & $F$-pure & almost equiv. & LZ holds \\ \hline \hline

$D_n$ & classical & $n \ge 4$ & \checkmark & \checkmark & \checkmark \\ \hline
\end{tabular}
\caption{The $D_n$ singularities in characteristic $p \ge 3$}
\end{table}

\begin{table}[h!]
\begin{tabular}{|l|p{8em}|l|p{3.5em}|l|}
\hline
& Equation & $F$-pure & almost equiv. & LZ holds \\ \hline \hline

$E_6^0$ & classical & --- & --- & --- \\ \hline
$E_6^1$ & $z^2 + x^3 + y^4 + x^2y^2$ & \checkmark & \checkmark & \checkmark \\ \hline \hline

$E_7^0$ & classical & --- & \checkmark & --- \\ \hline
$E_7^1$ & $z^2 + x^3 + xy^3 + x^2y^2$ & \checkmark & \checkmark & \checkmark \\ \hline \hline

$E_8^0$ & classical & --- & --- & --- \\ \hline
$E_8^1$ & $z^2 + x^3 + y^5 + x^2y^3$ & --- & \checkmark & \checkmark \\ \hline
$E_8^2$ & $z^2 + x^3 + y^5 + x^2y^2$ & \checkmark & \checkmark & \checkmark \\ \hline
\end{tabular}
\caption{The $E_n$ singularities in characteristic $p = 3$}
\end{table}

\begin{table}[h!]
\begin{tabular}{|l|p{8em}|l|p{3.5em}|l|}
\hline
& Equation & $F$-pure & almost equiv. & LZ holds \\ \hline \hline

$E_6$ & classical & \checkmark & \checkmark & \checkmark \\ \hline \hline

$E_7$ & classical & \checkmark & \checkmark & \checkmark \\ \hline \hline

$E_8^0$ & classical & --- & \checkmark & --- \\ \hline
$E_8^1$ & $z^2 + x^3 + y^5 + xy^4$ & \checkmark & \checkmark & \checkmark \\ \hline
\end{tabular}
\caption{The $E_n$ singularities in characteristic $p = 5$}
\end{table}

\begin{table}[h!]
\begin{tabular}{|l|p{8em}|l|p{3.5em}|l|}
\hline
& Equation & $F$-pure & almost equiv. & LZ holds \\ \hline \hline

$E_6$ & classical & \checkmark & \checkmark & \checkmark \\ \hline \hline

$E_7$ & classical & \checkmark & \checkmark & \checkmark \\ \hline \hline

$E_8$ & classical & \checkmark & \checkmark & \checkmark \\ \hline
\end{tabular}
\caption{The $E_n$ singularities in characteristic $p \ge 7$}
\end{table}

\section{Computing minimal generating sets} \label{compute mgs}

The following SINGULAR procedure takes as arguments a polynomial $f$ and a prime number $p$.
It then computes whether the singularity $\germ0X$ defined by $f$ in characteristic $p$ is $F$-pure and whether $\T X$ is free.
It also outputs a minimal generating set for $\T X$.

\begin{verbatim}
proc LipmanZariski(poly f, A, int p) {
    /** f: polynomial to be checked
        A: ring of which f is an element
           (should be char 0 polynomial ring)
        p: characteristic to be checked
           (needs to be a prime number) **/

    /* pass to char p polynomial ring */
    ring P = p, (x, y, z), ds; // local monomial ordering
    poly f = fetch(A, f);
    printf("f = %s in characteristic p = %s", f, p);

    /* check F-purity using Fedder's criterion */
    ideal Fed = x^p, y^p, z^p;
    if( reduce( f^(p-1), std(Fed) ) == 0 ) {
      print("X = { f = 0 } is not F-pure.");
    } else {
      print("X = { f = 0 } is F-pure.");
    }

    /* pass to quotient ring by f */
    qring R = std(f);
    poly f = fetch(P, f);

    /* compute minimal resolution of tangent sheaf */
    matrix Jac[1][3] = jacob(f);
    module T = syz(Jac);
    resolution rs = mres(T, 3);

    if( size(rs[1]) == 2 ) {
      print("T_X is free.");
    } else {
      print("T_X is not free.");
    }
    printf("Minimal generating set for T_X:");
    print(rs[1]);
}
\end{verbatim}

For example, in order to check the $E_8^0$ singularity $z^2 + x^3 + y^5 = 0$ in characteristic two, one may use the following commands in an interactive SINGULAR session (assuming the above code is contained in the file \texttt{funnylz.lib}):

\begin{verbatim}
    > LIB "funnylz.lib";
    > ring r = 0, (x, y, z), ds;
    > LipmanZariski(z^2 + x^3 + y^5, r, 2);

    f = z2+x3+y5 in characteristic p = 2
    X = { f = 0 } is not F-pure.
    T_X is free.
    Minimal generating set for T_X:
    0,y4,
    0,x2,
    1, 0
\end{verbatim}

In usual notation, this means that $\T X$ is generated by the two derivations $\del z$ and $y^4 \del x + x^2 \del y$.

\end{appendices}

\providecommand{\bysame}{\leavevmode\hbox to3em{\hrulefill}\thinspace}
\providecommand{\MR}{\relax\ifhmode\unskip\space\fi MR}
% \MRhref is called by the amsart/book/proc definition of \MR.
\providecommand{\MRhref}[2]{%
  \href{http://www.ams.org/mathscinet-getitem?mr=#1}{#2}
}
\providecommand{\href}[2]{#2}

\end{document}